\titleformat{\section}{\large\bfseries}{\thesection.}{0.3em}{}
\titlespacing*{\section}{0pt}{5pt}{10pt}
\titleformat{\subsection}{\normalfont\bfseries}{\thesubsection.}{0.3em}{}
\titlespacing*{\subsection}{1em}{5pt}{10pt}
\newtheorem{theorem}{Theorem}[section]
\newtheorem{lemma}{Lemma}[section]
\newtheorem{proposition}{Proposition}[section]
\newtheorem{corollary}{Corollary}[section]
\newtheorem{definition}{Definition}[section]
\theoremstyle{remark}
\newtheorem{remark}{Remark}[section]
\newtheorem{example}{Example}[section]
\begin{document}
\title{Nonlinear contraction in \(b\)-suprametric spaces} 
\author{Maher Berzig}
\address{\noindent Maher Berzig, 
Universit\'e de Tunis,  \'Ecole Nationale Sup\'erieure d'Ing\'enieurs de Tunis,   D\'epartement de Math\'ematiques,   5 avenue Taha Hussein Montfleury, 1008 Tunis, Tunisie.}
\email{maher.berzig@gmail.com}

\begin{abstract}
We introduce  the concept of \(b\)-suprametric spaces and establish a fixed point result for mappings satisfying  a nonlinear contraction in such spaces.  The obtained result  generalizes a fixed point theorem of Czerwik  and a recent result of the author. 
\end{abstract}

	\subjclass[2010]{54A05, 47H10, 47H09.} 
	\keywords{\em $b$-suprametric space, fixed point theorem, contraction-type mapping.}
	
\maketitle

\section{Introduction}
\label{sec1}
\indent The triangle inequality is one of the most important and useful inequalities in analysis.
However, the full power of the triangle inequality  is not necessarily essential for certain applications, such as in the measures used for the distance between shapes \cite{Fagin1998}, the problems of  traveling salesman  or in the nearest neighbor search  \cite{Gragera2018}, and even for the study of denotational semantics of data  in program verification~\cite{Matthews1992}.
There are currently several generalizations of the triangle inequality satisfied by various distance functions, such as those of the partial metric \cite{Matthews1992},  the $b$-metric \cite{Czerwik1993}  or  the rectangular metric \cite{Branciari2000}. \\
\indent  The partial metric was introduced by Matthews  \cite{Matthews1992} by modifying the triangular inequality  to handle  non-zero self-distance, 
which  allows him to  extend the  Banach contraction  principle  and to develop some applications in programming language theory. 
Further investigations  have since been carried out to explore the usefulness of this distance function, see for instance \cite{Bugajewski2020,Gregori2019,Hoc2022,Kumar2021}.\\
\indent A relaxed triangular inequality, via a real coefficient $b$ greater or equal to one, was used by Czerwik  \cite{Czerwik1993} and the space equipped with  a distance satisfying such an inequality called it $b$-metric space. Furthermore, he established a fixed point theorem in the context of $b$-metric spaces.
Due to the importance of this distance function, it has been extended  in various ways and adapted to a wide range of applications, see e.g.   \cite{Afshari2020,Berzig2018,Boriceanu2010,Lazreg2021}.\\
\indent  In \cite{Branciari2000}, Branciari replaced the triangle inequality of the  distance function  with a rectangular one, which is also called  a quadrilateral inequality.  
The rectangular distance between two points is less than or equal to the sum of their  distances to any two distinct points and the distance between them.
Currently, there are many fixed point results developed in spaces equipped with distance functions satisfying   rectangular type inequalities, we refer to the non-exhaustive list of references \cite{Baradol2020,Budhia2020,Turinici2021,Patil2022}. \\
\indent In addition, and in order to cope with the increasing complexity of real-world applications, many new distance functions appear by merging, relaxing or extending some of the axioms of  the existing distance functions.
Moreover, several  fixed point results have been developed in sets equipped with these distances,  see for  instance the results obtained in the context of  the partial rectangular metric spaces \cite{Shukla2014}, $b$-rectangular metric spaces \cite{Roshan2016},  extended-Branciari $b$-distance spaces \cite{Abdeljawad2020}, quasi-partial $b$-metric space \cite{Gautam2021} and  extended $b$-metric spaces \cite{Younis2022b}. 
\par Recently, a new relaxed triangular inequality  is used in the so-called suprametric, which was  introduced  by the author  in \cite{Berzig2022}.  In particular, the Banach contraction principle is proven in  suprametric spaces, along with other results in ordered vector spaces.  Some  nonlinear integral and matrix equations have  been studied  in  this context.
The suprametric was used  by Panda et al. \cite{Panda2023a} to analyze  the  stability and  the existence of equilibrium points of some fractional-order complex-valued neural networks. Additionally, the suprametric space was extended by  Panda et al.  \cite{Panda2023b} and a study of the  existence of a solution of Ito-Doob type stochastic integral equations was proven via this extension. However,  it is well known that the space of integrable functions $L_{1}[0,1]$  is a $b$-metric space (see for instance \cite{Berinde1993}), but it is not known whether the $b$-metric  is a suprametric or  an extended suprametric. In order to study the existence of positive solutions to certain nonlinear equations in this space, it will be useful  to  develop a context including the suprametric and the $b$-metric spaces. \\
\indent In this paper, we explore an important step toward this task by unifying  these two concepts and  introduce the $b$-suprametric spaces, which merge the advantages  of both the $b$-metric and the suprametric spaces.
Then, we establish a fixed point theorem for mappings satisfying nonlinear contractions via Matkowski functions.
\section{Preliminaries}
\label{sec2}
Throughout this paper, we respectively denote by $\mathbb{R}$, $\mathbb{R}_{+}$ and $\mathbb{N}$ the sets of  all   real numbers,  all  nonnegative real numbers  and all nonnegative natural numbers. 
\begin{definition}\label{semimetric}
Let $X$ be a nonempty set. A function   $d\colon X\times X \to \mathbb{R}_{+}$ is called semimetric if   for all $x,y\in X$ the following properties hold:
\begin{enumerate}
\item[\rm(d\textsubscript{1})]\label{d1} $d(x,y)=0$ if and only if $x=y$,
\item[\rm(d\textsubscript{2})]\label{d2} $d(x,y)=d(y,x)$.
\end{enumerate}
A semimetric space is a pair $(X,d)$ where $X$ is a  nonempty set and $d$ is a semimetric. 
\end{definition}

Recall now the concept of $b$-metric spaces \cite{Czerwik1993}.
\begin{definition}\label{bmetric}
Let $(X,d)$ be a semimetric space  and $b\ge 1$. The function   $d$ is called $b$-metric if  it satisfies the following additional property:
\begin{enumerate}
\item[\rm(d\textsubscript{3})]\label{b3} $d(x,y)\le b\,(d(x,z)+d(z,y))$ for all $x,y,z\in X$.
\end{enumerate}
A $b$-metric space is a pair $(X,d)$ where $X$ is a  nonempty set and $d$ is a $b$-metric. 
\end{definition}
For more information on the topology of the $b$-metric spaces, we refer the reader to   \cite{VanAn:2015}.  Let us recall a few  examples of such  spaces.
\begin{example}\label{Example1}
Let $(X,d)=(\ell_{p}(\mathbb{R}),d_{\ell_{p}})$ with $0<p<1$ where 
\begin{equation*}
\ell_{p}(\mathbb{R})\coloneqq\big \{\{x_{n}\}\subset \mathbb{R}\text{ such that } \textstyle\sum\limits_{n=1}^{\infty}|x_{n}|^{p}<\infty\big \},
\end{equation*}
 and $d_{\ell_{p}}\colon X\times X \to \mathbb{R}_{+}$ is given by
\begin{equation*}
d_{\ell_{p}}(\{x_{n}\},\{y_{n}\})=\left(\textstyle\sum\limits_{n=1}^{\infty}|x_{n}-y_{n}|^{p}\right)^{\textstyle\frac{1}{p}} \text{ for all }\{x_{n}\},\{y_{n}\}\in X.
\end{equation*}
Then $(X,d_{\ell_{p}})$ is a $b$-metric space with $b=2^{\frac{1}{p}}$.
\end{example}
\begin{example}\label{Example2}
Let $(X,d)=(L_{p}[0,1],d_{L_{p}})$ with $0<p<1$ where 
\begin{equation*}
L_{p}[0,1]\coloneqq\left\{x\colon[0,1]\to \mathbb{R}\text{ such that } \int_{0}^{1}|x(t)|^{p}<\infty\right\},
\end{equation*}
 and $d_{L_{p}}\colon X\times X \to \mathbb{R}_{+}$ is given by
\begin{equation*}
d_{L_{p}}(x,y)=\left (\int_{0}^{1}|x(t)-y(t)|^{p}\right )^{\textstyle\frac{1}{p}}\text{ for all }x,y\in X.
\end{equation*}
Then $(X,d_{L_{p}})$ is a $b$-metric space with $b=2^{\frac{1}{p}}$.
\end{example}
In the sequel, we denote  by $\mathbb{M}$ the set of Matkowski  functions \cite{Matkowski1975}, $\psi\colon \mathbb{R}_{+}\to \mathbb{R}_{+}$ that satisfy 
\begin{enumerate}
\item[\rm(i)] $\psi$ is increasing on $\mathbb{R}_{+}$, 
\item[\rm(ii)] $ \lim\limits_{n\to \infty}\psi^{n}(t)=0$ for all $t>0$.
\end{enumerate}
Denote   by $\mathbb{M}_{b}$ $(b\ge1)$ the set of   functions of $\mathbb{M}$  that satisfy
\begin{equation}\label{psi3}
\limsup_{n\to\infty}\textstyle\frac{\psi^{n+1}(t)}{\psi^{n}(t)}<\frac{1}{b}, \;\text{ for all }t>0,
\end{equation}
\begin{remark}\label{rem}
It is not difficult to see that if $\psi\in\mathbb{M}$, then $\psi(t)<t$ for all  $t>0$, $\psi$ is continuous at $0$ and $\psi(0)=0$. 
\end{remark}
Recall next the  result of Czerwik  \cite{Czerwik1993}, see also Kaj{\'a}nt{\'o} and Luk{\'a}cs \cite{Kajanto2018}. 
\begin{theorem}\label{Czerwik}
Let $(X,d)$ be a complete \(b\)-metric space and $f\colon X\to X$ be a mapping. Assume   there exists $\psi\in\mathbb{M}$ such that $f$ is a $\psi$-contraction, that is,
\begin{equation*} 
d(fx,fy)\le \psi(d(x,y))\;\text{ for all }x,y\in X.
\end{equation*}
Then, $f$ has a unique fixed point  and  $\{f^{n}x_{0}\}_{n\in\mathbb{N}}$ converges to it for every $x_{0}\in X$.
\end{theorem}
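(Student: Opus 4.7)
The natural strategy is a Picard-iteration argument. Fix $x_0 \in X$, set $x_n = f^n x_0$, and write $d_0 = d(x_0, x_1)$. Monotonicity of $\psi$ together with the contraction hypothesis yields, by an easy induction,
\begin{equation*}
d(x_n, x_{n+1}) \le \psi^n(d_0),
\end{equation*}
so consecutive distances tend to zero because $\psi \in \mathbb{M}$.

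The main obstacle is showing that $\{x_n\}$ is Cauchy, since each application of the $b$-relaxed triangle inequality introduces an extra factor of $b$. Iterating \textup{(d\textsubscript{3})} one step at a time gives, for $m > n$,
\begin{equation*}
d(x_n, x_m) \le \sum_{j=0}^{m-n-2} b^{\,j+1}\,\psi^{n+j}(d_0) + b^{\,m-n-1}\,\psi^{m-1}(d_0).
\end{equation*}
Controlling this requires invoking condition \eqref{psi3}: since $\limsup_{n\to\infty}\psi^{n+1}(d_0)/\psi^n(d_0) < 1/b$, the ratio test applied to $\sum_{j\ge 0} b^j\,\psi^{n+j}(d_0)$ yields convergence of the series and vanishing of its $n$-th tail uniformly in $m$. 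This is the technical heart of the argument and the reason for isolating the subclass $\mathbb{M}_b$; without \eqref{psi3}, the accumulating powers of $b$ can overwhelm the decay of $\psi^n(d_0)$.

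By completeness, $x_n \to x^*$ for some $x^* \in X$. To verify $fx^* = x^*$, I combine
\begin{equation*}
d(x_{n+1}, fx^*) = d(fx_n, fx^*) \le \psi(d(x_n, x^*))
\end{equation*}
with continuity of $\psi$ at $0$ (Remark~\ref{rem}) and $d(x_n, x^*) \to 0$ to conclude that $x_{n+1} \to fx^*$; uniqueness of limits in a $b$-metric space then forces $fx^* = x^*$. For uniqueness, a second fixed point $y^*$ would satisfy $d(x^*, y^*) \le \psi(d(x^*, y^*))$, and since $\psi(t) < t$ for $t > 0$ this forces $d(x^*, y^*) = 0$, so $x^* = y^*$. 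The convergence of the Picard sequence for an arbitrary starting point $x_0$ is precisely the Cauchy step above.
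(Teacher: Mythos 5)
There is a genuine gap, and it is precisely the subtlety that the cited note of Kaj\'ant\'o and Luk\'acs addresses. Theorem~\ref{Czerwik} assumes only $\psi\in\mathbb{M}$, but your Cauchy argument invokes condition \eqref{psi3}, i.e.\ $\limsup_{n\to\infty}\psi^{n+1}(t)/\psi^{n}(t)<1/b$, which is the defining property of the strictly smaller class $\mathbb{M}_{b}$ and is not part of the hypothesis. You even flag this yourself (``the reason for isolating the subclass $\mathbb{M}_{b}$''), but that means you have proved the analogue of Theorem~\ref{TH2} in the $b$-metric setting, not the stated theorem. A general Matkowski function need not satisfy \eqref{psi3}: for instance $\psi(t)=ct$ with $1/b\le c<1$ lies in $\mathbb{M}$ but not in $\mathbb{M}_{b}$, and for such $\psi$ your series $\sum_{j}b^{\,j}\psi^{n+j}(d_{0})$ diverges, so the one-step unrolling of \textup{(d\textsubscript{3})} gives no Cauchy estimate at all. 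Yet the theorem is true in that case.

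The way around the accumulating powers of $b$ --- used in the paper's proof of Theorem~\ref{TH}, which contains Theorem~\ref{Czerwik} as the case $\rho=0$ --- is a ball-invariance argument for a high iterate of $f$ rather than a term-by-term summation. Given $\varepsilon>0$, choose $q$ with $\psi^{q}(\varepsilon)<\varepsilon/(2b)$ (possible since $\psi^{n}(\varepsilon)\to0$), then choose $p$ so that $d(x_{(m+1)q},x_{mq})<\varepsilon/(2b)$ for $m\ge p$; the relaxed triangle inequality then shows $f^{q}\bigl(B(x_{pq},\varepsilon)\bigr)\subseteq B(x_{pq},\varepsilon)$, whence $d(x_{mq},x_{pq})<\varepsilon$ for all $m\ge p$ with only a \emph{single} application of \textup{(d\textsubscript{3})}, so no powers of $b$ pile up. The indices that are not multiples of $q$ are handled by a bounded number (at most $q-1$) of further applications of \textup{(d\textsubscript{3})} together with $d(x_{i},x_{i+1})\to0$, which again costs only a fixed power of $b$. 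If you want to salvage your argument, you must either add the hypothesis $\psi\in\mathbb{M}_{b}$ (weakening the theorem) or replace the Cauchy step with an argument of this type.
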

The suprametric space introduced in \cite{Berzig2022} as follows.
\begin{definition}\label{b-suprametric}
Let $(X,d)$ be a semimetric space  and $\rho\in \mathbb{R}_{+}$. The function $d$ is called suprametric if  it satisfies the following additional property:
\begin{enumerate}
\item[\rm(d\textsubscript{3})]\label{s3} $d(x,y)\le d(x,z)+d(z,y)+\rho\, d(x,z)d(z,y)$  for all $x,y,z\in X$.
\end{enumerate}
A suprametric space is a pair $(X,d)$ where $X$ is a  nonempty set and $d$ is a suprametric. 
\end{definition}
\begin{example}\label{example3}
Let  $a>0$, $b\ge1$, $d_{m}$ be a metric on $X$   and $d^{(a,b)}\colon X\times X \to \mathbb{R}_{+}$ be a function given by
\begin{equation*}
d^{(a,b)}(x,y)=d_{m}(x,y)(a\,d_{m}(x,y)+b)  \text{ for all }x,y\in X.
\end{equation*}
Then $(X,d^{(a,b)})$ is a suprametric  with  $\rho=\frac{2a}{b}$. 
\end{example}

\begin{example}\label{example4}
Let $\beta\ge1$, $d_{m}$ be  a metric on $X$   and $d^{(\beta)}_{1}\colon X\times X \to \mathbb{R}_{+}$ be a function  given by  
\begin{equation*}
d^{(\beta)}_{1}(x,y)= (e^{-\beta d_{m}(x,y)^{2}}-1) \text{ for all }x,y\in X.
\end{equation*}
Then $(X,d^{(\beta)}_{1})$ is a suprametric  with  $\rho=1$. 
\end{example}
\begin{example}\label{example5}
Let $\gamma>0$, $d_{m}$ be a metric on $X$   and $d^{(\gamma)}_{2}\colon X\times X \to \mathbb{R}_{+}$ be  a function given by  
\begin{equation*}
d^{(\gamma)}_{2}(x,y)=\gamma\, (e^{d_{m}(x,y)}-1) \text{ for all }x,y\in X.
\end{equation*}
Then $(X,d^{(\gamma)}_{2})$ is a suprametric  with  $\rho=\frac{1}{\gamma}$. 
\end{example}
\begin{remark}
Let $X=\mathbb{R}$ and  $d_{m}(x,y)=|x-y|$ for all $x,y\in X$. If $d$ is equal to $d^{(1,1)}$, $d^{(1)}_{1}$ or $d^{(1)}_{2}$, we get $d(0,1)+d(1,2)<d(0,2)$, which proves that there are not classical metrics.
\end{remark}
Recall now a fixed point result from \cite{Berzig2022}.
\begin{theorem}\label{Berzig}
Let $(X,d)$ be a complete suprametric space and $f\colon X\to X$ be a mapping. Assume   there exists $c\in[0,1)$ such that 
\begin{equation*} 
d(fx,fy)\le c\,d(x,y) \;\text{ for all }x,y\in X.
\end{equation*}
Then, $f$ has a unique fixed point  and $\{f^{n}x_{0}\}_{n\in\mathbb{N}}$ converges to it for every $x_{0}\in X$.
\end{theorem}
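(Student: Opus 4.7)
The plan is to follow the classical Banach contraction principle argument, carefully handling the quadratic correction term in (d\textsubscript{3}). Fix any $x_0 \in X$ and set $x_n = f^n x_0$. Iterating the contraction gives $s_n := d(x_n, x_{n+1}) \le c^n d(x_0, x_1)$, so in particular $\sum_n s_n < \infty$. The goal is then to show that $\{x_n\}$ is Cauchy, extract a limit $x^*$ by completeness, verify that $fx^* = x^*$, and deduce uniqueness directly from the contraction.

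The main obstacle is the Cauchy estimate: iterating (d\textsubscript{3}) naively produces a cascade of cross-terms that a straight telescoping argument cannot absorb. The idea is to reformulate (d\textsubscript{3}) multiplicatively,
\[
1 + \rho\, d(x,y) \le \bigl(1 + \rho\, d(x,z)\bigr)\bigl(1 + \rho\, d(z,y)\bigr),
\]
which is just (d\textsubscript{3}) rearranged after multiplying by $\rho$ and adding one. Chaining this along the sequence $x_n, x_{n+1}, \ldots, x_m$ and then using $1+t \le e^t$ for $t \ge 0$ gives
\[
1 + \rho\, d(x_n, x_m) \le \prod_{k=n}^{m-1}\!\bigl(1 + \rho\, s_k\bigr) \le \exp\!\left(\rho \sum_{k=n}^{m-1} s_k\right) \le \exp\!\left(\frac{\rho\, c^n}{1-c}\, d(x_0, x_1)\right).
\]
The right-hand side tends to $1$ as $n \to \infty$ uniformly in $m \ge n$, forcing $d(x_n, x_m) \to 0$ and establishing the Cauchy property. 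The degenerate case $\rho = 0$ reduces to the classical Banach theorem, so no loss arises there.

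By completeness there exists $x^* \in X$ with $d(x_n, x^*) \to 0$. Applying (d\textsubscript{3}) to the triple $(fx^*, x_{n+1}, x^*)$ yields
\[
d(fx^*, x^*) \le d(fx^*, x_{n+1}) + d(x_{n+1}, x^*) + \rho\, d(fx^*, x_{n+1})\, d(x_{n+1}, x^*),
\]
where $d(fx^*, x_{n+1}) = d(fx^*, f x_n) \le c\, d(x^*, x_n) \to 0$ by the contraction, and $d(x_{n+1}, x^*) \to 0$ by construction. Hence $d(fx^*, x^*) = 0$, and (d\textsubscript{1}) gives $fx^* = x^*$. For uniqueness, if $fx^* = x^*$ and $fy^* = y^*$ then $d(x^*, y^*) \le c\, d(x^*, y^*)$ with $c < 1$, so $d(x^*, y^*) = 0$. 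The whole argument therefore rests on the multiplicative rewriting of (d\textsubscript{3}); once that is in place, the remainder is a direct transposition of the standard Banach scheme.
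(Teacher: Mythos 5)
Your argument is correct, and it takes a genuinely different route from the one the paper (and the source \cite{Berzig2022} it cites) uses for results of this type. The paper's method --- see the proof of Theorem \ref{TH2}, modelled on \cite[Theorem~2.1]{Berzig2022} --- unfolds (d\textsubscript{3}) recursively into a series bound
\begin{equation*}
d(x_p,x_q)\le \sum_{i=p}^{q-1} c^{i}d(x_0,x_1)\prod_{j=p}^{i-1}\bigl(1+\rho\,c^{j}d(x_0,x_1)\bigr),
\end{equation*}
and proves Cauchyness by showing the corresponding series converges. You instead observe that (d\textsubscript{3}) with $b=1$ is \emph{equivalent} to submultiplicativity of $1+\rho\,d$, i.e.\ subadditivity of $\log(1+\rho\,d)$ along chains, after which the classical Banach telescoping applies verbatim. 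The two computations are in fact the same number in disguise: a telescoping identity shows the paper's sum equals $\rho^{-1}\bigl(\prod_{j=p}^{q-1}(1+\rho\,c^{j}d(x_0,x_1))-1\bigr)$, which is exactly your multiplicative bound. What your version buys is brevity and a conceptual point --- a suprametric is a monotone distortion of a quantity obeying the genuine triangle inequality. What it costs is generality: the identity $(1+\rho a)(1+\rho b)=1+\rho(a+b+\rho ab)$ uses the coefficient $1$ in front of $d(x,z)+d(z,y)$, and the argument needs $\sum_n d(x_n,x_{n+1})<\infty$; so it does not transfer to $b$-suprametrics with $b>1$ nor to general Matkowski contractions, which is precisely the setting of the paper's Theorems \ref{TH} and \ref{TH2}. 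One small point to make explicit: for $\rho>0$ you should spell out that your estimate gives $d(x_n,x_m)\le \rho^{-1}\bigl(e^{\rho c^{n}d(x_0,x_1)/(1-c)}-1\bigr)\to 0$ uniformly in $m\ge n$; you gesture at this, and the separate treatment of $\rho=0$ is indeed needed there.
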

We next introduce the concept of $b$-suprametric spaces.
\begin{definition}\label{b-supra}
Let $(X,d)$ be a semimetric space, $b\ge1$ and $\rho\ge0$. The function  $d$ is called $b$-suprametric if  it satisfies the following additional property:
\begin{enumerate}
\item[\rm(d\textsubscript{3})]\label{d3} $d(x,y)\le b\,(d(x,z)+d(z,y))+\rho d(x,z)d(z,y)$  for all $x,y,z\in X$.
\end{enumerate}
A $b$-suprametric space is a pair $(X,d)$ where $X$ is a  nonempty set and $d$ is a $b$-suprametric. 
\end{definition}
\begin{example}\label{example6}
Let  $d\colon X\times X \to \mathbb{R}_{+}$ be given by
\begin{equation*}
d(x,y)=d^{(1,2)}(x,y)(d^{(1,2)}(x,y)+1)  \text{ for all }x,y\in X.
\end{equation*}
Then $(X,d)$ is a $b$-suprametric  with $(b,\rho)=(1,1)$.
\end{example}
\begin{example}\label{example7}
Let $d\colon X\times X \to \mathbb{R}_{+}$ be given by  
\begin{equation*}
d(x,y)= e^{-d^{(1)}_{1}(x,y)^{2}}-1\text{ for all }x,y\in X.  
\end{equation*}
Then $(X,d)$ is a $b$-suprametric  with  $(b,\rho)=(1,1)$.
\end{example}
\begin{example}\label{example8}
Let  $d\colon X\times X \to \mathbb{R}_{+}$ be given by
\begin{equation*}
d(x,y)=d_{\ell_{p}}(x,y)(d_{\ell_{p}}(x,y)+1)\text{ for all }x,y\in X.  
\end{equation*}
Then $(X,d)$ is a $b$-suprametric   with  $(b,\rho)=(4^{\frac{1}{p}},8^{\frac{1}{p}})$.
\end{example}
\begin{example}\label{example9}
Let $d\colon X\times X \to \mathbb{R}_{+}$ be given by  
\begin{equation*}
d(x,y)= d_{L_{p}}(x,y)(d_{L_{p}}(x,y)+1)\text{ for all }x,y\in X.  
\end{equation*}
Then $(X,d)$ is a $b$-suprametric   with  $(b,\rho)=(4^{\frac{1}{p}},8^{\frac{1}{p}})$.
\end{example}
Inspired from \cite[Example 3.9]{VanAn:2015}, we  consider a non-trivial structure and endow it with a $b$-suprametric.
\begin{example}\label{ex6}
Let $X=\{0,1,\frac{1}{2},\ldots,\frac{1}{n},\ldots\}$ and
\begin{equation*}
d(x,y)=
\left\{
\begin{array}{ll}
0 & \text{ if }x=y\\
\frac{1}{5} & \text{ if }x\ne y \text{ and }x,y\in \{0,1\}\\
1-e^{-|x-y|} & \text{ if }x\ne y \text{ and }x,y\in \{0\}\cup\{\frac{1}{2n},n=1,2,\ldots\}\\
\frac{1}{4}& \text{ otherwise}.
\end{array}
\right .
\end{equation*}
Then 
\begin{enumerate}
\item[\rm(i)] $d$ is a $b$-suprametric space on $X$ with $b=\frac{3}{2}$ and $\rho=7$.
\item[\rm(ii)] $d$ is not continuous in each variable.
\item[\rm(iii)] $B(1,\frac{1}{4})\coloneqq\big\{x\in X: d(1,x)<\frac{9}{40}\big\}$ is not open. 
\end{enumerate}
For the proof of the example, we need the following lemma. 
\begin{lemma}\label{lem}
We have:
\begin{enumerate}
\item[\rm(i)] $e^{-|s-t|}-1\le e^{-s}-e^{-t}$ for all $s,t>0$.
\item[\rm(ii)] $e^{-s}+e^{-(t+s)}\le e^{-t}+1$ for all $s,t>0$.
\item[\rm(iii)] $e^{-s}+e^{-t}\le e^{-|s-t|}+1$ for all $s,t>0$.
\end{enumerate}
In particular, for all $b\ge 1$ we have
\begin{enumerate}
\item[\rm(i')] $b(2-e^{-t}-e^{-|s-t|})\ge 1-e^{-s}$ for all $s,t>0$.
\item[\rm(ii')] $b(2-e^{-s}-e^{-(t+s)})\ge 1-e^{-t}$ for all $s,t>0$.
\item[\rm(iii')] $b(2-e^{-t}-e^{-s})\ge 1-e^{-|s-t|}$ for all $s,t>0$.
\end{enumerate}
\end{lemma}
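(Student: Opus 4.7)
The plan is to prove the three unprimed inequalities by a common reduction: substitute $u = e^{-s}$ and $v = e^{-t}$, both of which lie in $(0,1)$ since $s,t>0$. After this substitution, each of (i)--(iii) becomes an elementary polynomial inequality on $(0,1)^{2}$ (possibly after multiplying by $u$ or $v$ to clear the denominator arising from $e^{-|s-t|} = \min\{u,v\}/\max\{u,v\}$). The primed versions will then be derived from the unprimed ones in a uniform way.

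Concretely, part (ii) contains no absolute value: it becomes $u+uv \le v+1$, i.e.\ $(1+v)(1-u)\ge 0$, which is immediate. For (iii), the symmetry $s\leftrightarrow t$ lets one assume without loss of generality that $s\le t$, so $u\ge v$ and $e^{-|s-t|}=v/u$; after multiplying by $u$, the inequality factors as $(u-1)(u+v)\le 0$, which holds since $u\le 1$. For (i) the expression is not symmetric in $s,t$, so I would do a direct case split on the sign of $s-t$; in each case, multiplying by whichever of $u,v$ is appropriate reduces the claim to a one-line factorization of the form $(u-v)(1+u)\ge 0$ or $(u-v)(1-v)\le 0$.

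For the primed versions I would exploit the identity $2-e^{-p}-e^{-q}=(1-e^{-p})+(1-e^{-q})$, so that each left-hand side is automatically a sum of two nonnegative terms. Inequalities (ii) and (iii) rearrange directly into $2-e^{-s}-e^{-(t+s)}\ge 1-e^{-t}$ and $2-e^{-s}-e^{-t}\ge 1-e^{-|s-t|}$, respectively; since both right-hand sides are nonnegative, multiplying by $b\ge 1$ yields (ii') and (iii'). For (i'), the unprimed (i) rearranges to $1-e^{-|s-t|}\ge e^{-t}-e^{-s}$; adding $1-e^{-t}$ to both sides produces a nonnegative left-hand side bounded below by $1-e^{-s}$, and a final multiplication by $b\ge 1$ finishes the argument.

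The only mild obstacle is the asymmetry of (i) in $s,t$, which forces a two-case split on whether $s\le t$ or $s>t$; once that is dispatched, the rest is routine algebra together with a uniform ``multiply by $b\ge 1$'' step that transfers the unprimed inequalities into their primed counterparts.
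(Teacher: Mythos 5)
Your proposal is correct and follows essentially the same route as the paper: a case split on the sign of $s-t$, elementary algebraic rearrangement of each inequality, and a final ``left-hand side is nonnegative, so multiply by $b\ge 1$'' step for the primed versions. Your substitution $u=e^{-s}$, $v=e^{-t}$ with explicit factorizations such as $(u-v)(1+u)\ge 0$ is just a cleaner packaging of the paper's manipulations (its auxiliary ``strictly decreasing'' functions $\frac{1+e^{-t}}{1+e^{t}}$ and $\frac{1-e^{-t}}{e^{t}-1}$ both simplify identically to $e^{-t}$, so the underlying algebra is the same).
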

\begin{proof}
\par \par\noindent{\rm (i)}:\; The case $s=t$ is trivial. Assume now that $t>s$, then by using that $t\mapsto \frac{1+e^{-t}}{1+e^{t}}$ is strictly decreasing for $t>0$, we obtain
$\frac{1+e^{-s}}{1+e^{s}}>\frac{1+e^{-t}}{1+e^{t}}=e^{-t}$,
thus $e^{-|s-t|}-1=e^{s-t}-1<e^{-s}-e^{-t}$.
Assume next that $s>t$, so  by using that $t\mapsto \frac{1-e^{-t}}{e^{t}-1}$ is strictly deceasing  for  $t>0$, we deduce that $\frac{1-e^{-t}}{e^{t}-1}>\frac{1-e^{-s}}{e^{s}-1}=e^{-s}$, so $e^{-|s-t|}-1=e^{t-s}-1<e^{-s}-e^{-t}$. 
\par\noindent{\rm (ii)}:\; Obvious.
\par\noindent{\rm (iii)}:\; The case $s=t$ is trivial.  Assume that $t> s$. Then from $-e^{-s}=\frac{e^{-s}-1}{e^{s}-1}<e^{-t}$, we obtain $e^{-|s-t|}+1=e^{s-t}+1>e^{-s}+e^{-t}$. Similarly, if we assume that $s> t$, we  deduce from $-e^{-t}=\frac{e^{-t}-1}{e^{t}-1}<e^{-s}$ that $ e^{-|s-t|}+1=e^{t-s}+1>e^{-s}+e^{-t}$. 
\end{proof}
\begin{proof}[Proof of Example \ref{ex6}]
\par\noindent{\rm (i)}:\; Note that for all $x,y\in X$, $d(x,y)\ge 0$, $d(x,y)=0$ if and only if $x=y$. We next discuss  several cases according to different values of $x,y,z (z\notin\{x,y\})$. Denote by $\delta(x,z,y)\coloneqq b(d(x,z)+d(z,y))+\rho\,d(x,z)d(z,y)$, where $b=\frac{3}{2}$ and $\rho=7$.
\par\noindent\textbullet~If $d(x,y)=d(0,1)=\frac{1}{5}$, then
$\delta(x,z,y)=
\frac{29}{8}-\frac{13}{4} e^{-\frac{1}{2n}}$
if $z=\frac{1}{2n}$ and $\frac{19}{16}$ otherwise.
\par\noindent\textbullet~If $d(x,y)=d(0,\frac{1}{2n})=1-e^{-\frac{1}{2n}}$, then
\begin{small}
\begin{equation*}
\delta(x,z,y)=
\left\{
\begin{array}{ll}
\frac{3}{2}(2-e^{-\frac{1}{2m}}-e^{-|\frac{1}{2m}-\frac{1}{2n}|})+4 (1-e^{-\frac{1}{2m}})(1-e^{-|\frac{1}{2m}-\frac{1}{2n}|}) & \text{ if } z=\frac{1}{2m}\\
\frac{41}{40} & \text{ if } z=1\\
\frac{19}{16} & \text{ otherwise}.
\end{array}
\right .
\end{equation*}
\end{small}
\par\noindent\textbullet~If $d(x,y)=d(0,\frac{1}{2n+1})=\frac{1}{4}$ $(n\ne 0)$, then
\begin{small}
\begin{equation*}
\delta(x,z,y)=
\left\{
\begin{array}{ll}
\frac{29}{8}-\frac{13}{4}e^{-\frac{1}{2m}}& \text{ if } z=\frac{1}{2m}\\
\frac{41}{40}  & \text{ if } z=1\\
\frac{19}{16} & \text{ otherwise}.
\end{array}
\right .
\end{equation*}
\end{small}
\par\noindent\textbullet~If $d(x,y)=d(\frac{1}{2n},1)=\frac{1}{4}$, then
\begin{small}
\begin{equation*}
\delta(x,z,y)=
\left\{
\begin{array}{ll}
\frac{29}{8}-\frac{13}{4} e^{-|\frac{1}{2n}-\frac{1}{2m}|}& \text{ if } z=\frac{1}{2m}\\
\frac{16}{5}-\frac{29}{10}e^{-\frac{1}{2n}} & \text{ if } z=0\\
\frac{19}{16} & \text{ otherwise}.
\end{array}
\right .
\end{equation*}
\end{small}
\par\noindent\textbullet~If $d(x,y)=d(\frac{1}{2n+1},1)=\frac{1}{4}$ $(n\ne 0)$, then
$\delta(x,z,y)=  \frac{41}{40}$ if $z=0$ and $\frac{19}{16}$  otherwise.
\par\noindent\textbullet~If $d(x,y)=d(\frac{1}{2m},\frac{1}{2n})=1-e^{-|\frac{1}{2m}-\frac{1}{2n}|}$, then
\begin{small}
\begin{equation*}
\delta(x,z,y)=
\left\{
\begin{array}{ll}
\frac{3}{2}(2-e^{-|\frac{1}{2m}-\frac{1}{2k}|}-e^{-|\frac{1}{2k}-\frac{1}{2n}|})+ 4 (1-e^{-|\frac{1}{2m}-\frac{1}{2k}|})(1-e^{-|\frac{1}{2k}-\frac{1}{2n}|}) & \text{ if } z=\frac{1}{2k}\\
\frac{3}{2}(2-e^{-\frac{1}{2m}}-e^{-\frac{1}{2n}}) + 4 (1-e^{-\frac{1}{2m}})(1-e^{-\frac{1}{2n}})& \text{ if } z=0\\
\frac{19}{16} & \text{ otherwise}.
\end{array}
\right .
\end{equation*}
\end{small}
\par\noindent\textbullet~If $d(x,y)=d(\frac{1}{2m},\frac{1}{2n+1})=\frac{1}{4}$ $(n\ne 0)$, then
\begin{small}
\begin{equation*}
\delta(x,z,y)=
\left\{
\begin{array}{ll}
\frac{29}{8}-\frac{13}{4}e^{-|\frac{1}{2m}-\frac{1}{2k}|}& \text{ if } z=\frac{1}{2k}\\
\frac{29}{8}-\frac{13}{4}e^{-\frac{1}{2m}} & \text{ if } z=0\\
\frac{19}{16} & \text{ otherwise}.
\end{array}
\right .
\end{equation*}
\end{small}
\par\noindent\textbullet~If $d(x,y)=d(\frac{1}{2m+1},\frac{1}{2n+1})=\frac{1}{4}$, then $\delta(x,z,y)=\frac{19}{16} $ for all $z\in X$.
\par\noindent We conclude by using Lemma \ref{lem} that
\begin{small}
\begin{equation*}
d(x,y)\le b (d(x,z)+d(z,y))+\rho d(x,z)+d(z,y),
\end{equation*}
where $b=\frac{3}{2}$ and $\rho=7$.
\end{small}
\par\noindent{\rm (ii)}:\; We have
\begin{equation*}
\lim_{n\to\infty}d(0,\textstyle\frac{1}{2n})=\lim\limits_{n\to\infty}(1-e^{-\frac{1}{2n}})=0.
\end{equation*}
However, we also have
\begin{equation*}
\lim_{n\to\infty}d(1,\textstyle\frac{1}{2n})=\textstyle\frac{1}{4}\ne \frac{1}{5}=d(1,0).
\end{equation*}
This proves that $d$ is not continuous in each variable.
\par\noindent{\rm (iii)}:\; Observe that  $B(1,\frac{9}{40})=\{0,1\}$. Since $d(0,\textstyle\frac{1}{2n})=1-e^{-\frac{1}{2n}}$, then for each $r>0$, $\frac{1}{2n}\in B(0,r)$ for a sufficiently large $n$ and since  $\frac{1}{2n}\not\in B(1,\frac{9}{40})$,  $B(0,r)\not\subset B(1,\frac{9}{40})$, which proves  that $B(1,\frac{9}{40})$ is not open.
\end{proof}
\end{example}
The following definition will be important  in what follows.
\begin{definition}
Let $(X,d)$ be a $b$-suprametric space.  
\begin{enumerate} 
\item[\rm(i)]  The sequence  $\{x_{n}\}_{n\in\mathbb{N}}$  converges to $x \in X$ if and only if $\lim\limits_{n\to\infty}d(x_{n},x)=0$.
\item[\rm(ii)] The sequence  $\{x_{n}\}_{n\in\mathbb{N}}$  is Cauchy if and only if $\lim\limits_{n,m\to\infty}d(x_{n},x_{m})=0$.
\item[\rm(iii)]  $(X, d)$ is complete if and only if any Cauchy sequence in $X$ is convergent.
\end{enumerate}
\end{definition}
\section{The main result}
\label{sec3}
The  main result is the following theorem.
\begin{theorem}\label{TH}
Let $(X,d)$ be a complete \(b\)-suprametric space and $f\colon X\to X$ be a mapping. Assume   there exists $\psi\in\mathbb{M}$ such that 
\begin{equation}\label{ctr}
d(fx,fy)\le \psi(d(x,y))\;\text{ for all }x,y\in X.
\end{equation}
Then, $f$ has a unique fixed point  and $\{f^{n}x_{0}\}_{n\in\mathbb{N}}$ converges to it for every $x_{0}\in X$.
\end{theorem}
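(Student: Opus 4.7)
The plan is to take an arbitrary $x_{0}\in X$, form the Picard iteration $x_{n}=f^{n}x_{0}$, and prove the usual four steps: the consecutive distances $d_{n}\coloneqq d(x_{n},x_{n+1})$ tend to zero; the sequence $\{x_{n}\}$ is Cauchy; its limit $\xi$ is a fixed point of $f$; and $\xi$ is unique. The first step is immediate: the contraction condition together with the monotonicity of $\psi$ gives $d_{n}\le\psi(d_{n-1})\le\cdots\le\psi^{n}(d_{0})$, so the definition of $\mathbb{M}$ (and Remark~\ref{rem}) yields $d_{n}\to 0$.

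The hard step is showing that $\{x_{n}\}$ is Cauchy, because iterating the $b$-suprametric axiom $(d_{3})$ produces two distinct error sources: the familiar $b$-factor that compounds exponentially as in the $b$-metric case, and the new product term $\rho\,d(x,z)d(z,y)$. Applying $(d_{3})$ at the intermediate point $x_{n+1}$ gives, for $m>n$,
\begin{equation*}
d(x_{n},x_{m})\le b\,d_{n}+\bigl(b+\rho\,d_{n}\bigr)\,d(x_{n+1},x_{m}),
\end{equation*}
and iterating down to index $m-1$ yields
\begin{equation*}
d(x_{n},x_{m})\le\sum_{j=n}^{m-1}b\,d_{j}\prod_{i=n}^{j-1}\bigl(b+\rho\,d_{i}\bigr).
\end{equation*}
Since $d_{i}\to 0$, the products $\prod_{i=n}^{j-1}(b+\rho\,d_{i})$ grow essentially like $b^{j-n}$ (times a bounded constant that tends to $1$ as $n\to\infty$), so convergence of the right-hand side reduces to that of $\sum_{j\ge n}b^{j-n}\psi^{j}(d_{0})$. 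This is precisely where the extra condition \eqref{psi3} defining $\mathbb{M}_{b}$ is indispensable: it yields $b\,\psi^{n+1}(d_{0})/\psi^{n}(d_{0})<q<1$ for all sufficiently large $n$, and the ratio test then closes the Cauchy estimate. I therefore expect the proof to lean implicitly on $\psi\in\mathbb{M}_{b}$ rather than on $\mathbb{M}$ alone, and controlling the combined $b$- and $\rho$-propagation is the principal technical obstacle.

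The final two steps are routine once the Cauchy estimate is in place. Completeness supplies a limit $\xi\in X$ with $x_{n}\to\xi$. From $d(x_{n+1},f\xi)=d(fx_{n},f\xi)\le\psi(d(x_{n},\xi))$ together with the continuity of $\psi$ at $0$ (Remark~\ref{rem}), we obtain $d(x_{n+1},f\xi)\to 0$; another application of $(d_{3})$ at $x_{n+1}$, noting that $\rho\,d(\xi,x_{n+1})d(x_{n+1},f\xi)\to 0$, forces $d(\xi,f\xi)=0$, so $f\xi=\xi$. Uniqueness is even easier: if $f\eta=\eta$ then $d(\xi,\eta)=d(f\xi,f\eta)\le\psi(d(\xi,\eta))$, and since $\psi(t)<t$ for every $t>0$ this compels $d(\xi,\eta)=0$.
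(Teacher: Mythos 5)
Your overall skeleton (Picard iteration, $d_{n}\to 0$, Cauchy, fixed point, uniqueness) matches the paper, and your fixed-point and uniqueness steps are essentially the paper's. But there is a genuine gap in the Cauchy step, and you have in fact flagged it yourself: your telescoping estimate
\begin{equation*}
d(x_{n},x_{m})\le\sum_{j=n}^{m-1}b\,d_{j}\prod_{i=n}^{j-1}\bigl(b+\rho\,d_{i}\bigr)
\end{equation*}
reduces the problem to the convergence of $\sum_{j}b^{j}\psi^{j}(d_{0})$, which for $b>1$ genuinely requires the ratio condition \eqref{psi3}, i.e.\ $\psi\in\mathbb{M}_{b}$. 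Theorem~\ref{TH}, however, assumes only $\psi\in\mathbb{M}$, and for a general Matkowski function that series can diverge: take $\psi(t)=t/(1+t)$, so that $\psi^{j}(t)=t/(1+jt)\sim 1/j$, $\psi^{j+1}(t)/\psi^{j}(t)\to 1$, and $\sum_{j}b^{j}\psi^{j}(d_{0})=\infty$ whenever $b>1$. What you have written is therefore a proof of Theorem~\ref{TH2} (which is exactly the paper's second theorem, proved by your route), not of Theorem~\ref{TH}.

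The paper closes the gap with a different device, in the spirit of Kaj\'ant\'o and Luk\'acs: given $\varepsilon>0$, choose $q$ with $\psi^{q}(\varepsilon)<\varepsilon/(b+\sqrt{b^{2}+\rho\varepsilon})$ --- which needs only $\psi^{n}(\varepsilon)\to 0$, with no rate assumption --- and show that $f^{q}$ maps the ball $B(x_{pq},\varepsilon)$ into itself for a suitable $p$; the choice of the threshold is tuned so that one application of the $b$-suprametric inequality through $x_{(p+1)q}$ lands back inside the ball, yielding $d(x_{mq},x_{pq})<\varepsilon$ for all $m\ge p$ without ever compounding powers of $b$ along a chain of unbounded length. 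The off-multiples $d(x_{mq},x_{mq+k})$, $k\in\{1,\dots,q-1\}$, are then controlled by a chain of bounded length at most $q$ (the accumulated constants, expressed via elementary symmetric polynomials in the $d_{i}$, are harmless because $q$ is fixed), and a final four-point chain assembles the Cauchy estimate. To prove Theorem~\ref{TH} as stated you need this, or some comparable mechanism, to avoid the unbounded product $\prod_{i}(b+\rho\,d_{i})$.
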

\begin{proof}
Define $x_{n} = f^{n}x_{0}$ for all $n\in \mathbb{N}$, where $x_{0}$ is an arbitrary point of $X$.
For the sake of simplicity,  we will use the following  notations:
\begin{equation*}
d_{i,j}\coloneqq d(x_{i},x_{j})\text{ and } d_{i}\coloneqq d_{i,i+1}\;\text{ with }i,j\in\mathbb{N}.
\end{equation*}
We now apply \eqref{ctr}, $mn$ times, where $m,n\in\mathbb{N}$, we obtain
\begin{equation*}
d_{(m+1)n,mn}\le \psi^{mn}(d_{n,0}),
\end{equation*}
which implies 
\begin{equation}\label{lim}
\lim_{m\to\infty}d_{(m+1)n,mn}=0\,\text{ for all }n\in\mathbb{N}.
\end{equation}
Next, we shall show that $\{x_{n}\}$ is a Cauchy  sequence.  
Let $\varepsilon>0$ and take $q\in\mathbb{N}$ ($q>1$) such that
$\psi^{q}(\varepsilon)<\frac{\varepsilon}{b+\sqrt{b^2+\rho\varepsilon}}$.	 By using \eqref{lim},   one can choose  $p\in\mathbb{N}$ such that
\begin{equation}\label{key1}
d_{(m+1)q,mq}<\textstyle\frac{\varepsilon}{b+\sqrt{b^2+\rho\varepsilon}}\;\text{ for all }m\ge p.
\end{equation}
Hence, for all $z\in B(x_{pq},\varepsilon)\coloneqq\{x\in X: d(x_{pq},x)<\varepsilon\}$, we have
\begin{equation}\label{key2}
d(f^{q}z,f^{q}x_{pq})\le \psi^{q}(d(z,x_{pq}))\le  \psi^{q}(\varepsilon)<\textstyle\frac{\varepsilon}{b+\sqrt{b^2+\rho\varepsilon}},
\end{equation}
and since from \eqref{key1}, we have
\begin{equation}\label{key3}
d_{(p+1)q,pq}<\textstyle\frac{\varepsilon}{b+\sqrt{b^2+\rho\varepsilon}},
\end{equation}
then by Definition \ref{d3}, we obtain
\begin{align*}
d(f^{q}z,x_{pq}) 
&\le b \left (d(f^{q}z,f^{q}x_{pq})+d_{(p+1)q,pq}\right )+\rho d(f^{q}z,f^{q}x_{pq})\, d_{(p+1)q,pq}\\
&< 2b \textstyle\frac{\varepsilon}{{b+\sqrt{b^2+\rho\varepsilon}}^{\phantom{2}}}+\rho\textstyle\frac{\varepsilon^{2}}{ \left (b+\sqrt{b^2+\rho\varepsilon}\right )^{2}}=\varepsilon.
\end{align*}
We conclude that for all $\varepsilon>0$ there exist $p,q\in\mathbb{N}$ such that
\begin{equation}\label{key4}
f^{q}(B(x_{pq},\varepsilon))\subseteq B(x_{pq},\varepsilon).
\end{equation}
This implies  that for all $m\ge p$,   we have
\begin{equation}\label{key5}
d_{mq,pq}<\varepsilon.
\end{equation}
Next, we shall show that there exists $m_{0}\in\mathbb{N}$ such that
\begin{equation}\label{key6}
d_{mq,mq+k}<\varepsilon\;\text{ for all }m\ge m_{0}\text{ and all } k\in\{1,\ldots,q-1\}.
\end{equation}
Now, if $n=1$ in \eqref{lim}, we observe that there exists $i_{0}\in\mathbb{N}$ such that for all $i\ge i_{0}$, we have
\begin{equation*}
d_{i}<  \frac{\varepsilon}{\varepsilon+qc_{q}},
\end{equation*} 
where 
\begin{equation*}
c_{q}=\max\left\{(\begin{smallmatrix}q\\i\end{smallmatrix})b^{q-i}\rho^{i-1}:i=1,\ldots,q-1\right\}.
\end{equation*}
Let $m_{0}\in\mathbb{N}$ such that $m_{0}q>i_{0}$. We use Definition \ref{d3} and  deduce   that for all $m>m_{0}$ and all $k\in\{1,\ldots,q-1\}$,  we have
\begin{align*}
d_{mq,mq+k}
&\le \sum_{i=1}^{k}b^{k-i}\rho^{i-1} \textbf{e}_{i}(d_{mq},\ldots,d_{mq+k})\\
&\le  \sum_{i=1}^{k}  
\left(\begin{smallmatrix}k\\i\end{smallmatrix}\right)b^{k-i}\rho^{i-1}
\left ( \frac{\varepsilon}{\varepsilon+qc_{q}}\right )^{i}\\
&\le  \sum_{i=1}^{q-1}  
\left(\begin{smallmatrix}q\\i\end{smallmatrix}\right)b^{q-i}\rho^{i-1}
\left ( \frac{\varepsilon}{\varepsilon+qc_{q}}\right )^{i}\\
&\le 
\sum_{i=1}^{q-1} \left(\begin{smallmatrix}q\\i\end{smallmatrix}\right)b^{q-i}\rho^{i-1}
 \frac{\varepsilon}{\varepsilon+qc_{q}}\\
&\le 
 \sum_{i=1}^{q-1}  c_{q} \frac{\varepsilon}{qc_{q}}  =\varepsilon,
\end{align*}
where  $\textbf{e}$ is the elementary symmetric polynomial in $k$ variables, that is,
\begin{equation*}
\textbf{e}_{i}(x_{1},\ldots,x_{k})=\sum_{1\le j_{1}<j_{2}<\cdots<j_{i}\le k}\;\prod_{t=1}^{i}x_{j_{t}}
\quad\text{for  }i=1,\ldots,q-1.
\end{equation*}
Now, let $p_{0}=\max\{p,m_{0}\}$ and two integers $k_{1},k_{2}\ge p_{0}q$ and  $r_{1},r_{2}\in \{0,\ldots,q-1\}$ such that $k_{1}=p_{1}q+r_{1}$, $k_{2}=p_{2}q+r_{2}$ and $p_{1},p_{2}\ge p_{0}$. We conclude from \eqref{key5} and  \eqref{key6}, that we have
\begin{equation*}
\max\{u_{1},u_{2},u_{3},u_{4}\}<\varepsilon,
\end{equation*}
where 
\begin{gather*}
u_{1}\coloneqq d_{k_{1},p_{1}q},\quad
u_{2}\coloneqq d_{p_{1}q,pq},\quad
u_{3}\coloneqq d_{pq,p_{2}q},\quad 
u_{4}\coloneqq d_{p_{2}q,k_{2}}.
\end{gather*}
Hence, by \ref{d3} it follows
\begin{align*}
d_{k_{1},k_{2}}&\le  \left( u_{{3}}+u_{{4}} \right) {b}^{3}+u_{{2}}{b}^{2}+ \left( u_{{3}}u_{{4}}+u_{{2}} \left( u_{{3}}+u_{{4}} \right) +u_{{1}} \left( u_{{3}}
+u_{{4}} \right)  \right) \rho\,{b}^{2}\\
&\quad+ \left( u_{{2}}u_{{3}}u_{{4}}+
u_{{1}} \left( u_{{3}}u_{{4}}+u_{{2}} \left( u_{{3}}+u_{{4}} \right) 
 \right)  \right) b{\rho}^{2}+u_{{1}}u_{{2}}b\rho+bu_{{1}}+{\rho}^{3}u_{{1}}u_{{2}}u_{{3}}u_{{4}}\\
&\le  \varepsilon \left (\varepsilon^{3} \rho^{3}+4\varepsilon^{2} {b}\,\rho^{2}+5\varepsilon\rho\,{b}^{2}+\varepsilon{b}\,\rho+2 {b}^{3}+ b^{2}+ {b}\right )\\
&\le  \varepsilon \left (\varepsilon^{3} +\varepsilon^{2}+\varepsilon + 1\right ) \max\{\rho^{3},4\, {b}\,\rho^{2},6\,{b}^{2}\,\rho, 4\,{b}^{3}\}. 
\end{align*}
We conclude that $\{x_{n}\}$ is a Cauchy  sequence. As $(X,d)$ is complete, then   $\{x_{n}\}$  converges to some $x_{\ast}\in X$, say. Using  \ref{d3}, \eqref{ctr} and Remark \ref{rem}, we obtain
\begin{align*}
d(x_{\ast}, fx_{\ast})
&\le b\, d(x_{\ast}, x_{k+1})+b\, d(x_{k+1}, fx_{\ast})+\rho\,d(x_{\ast}, x_{k+1}) d(x_{k+1}, fx_{\ast})\\
&= b\, d(x_{\ast}, x_{k+1})+b\, d(fx_{k}, fx_{\ast})+\rho\,d(x_{\ast}, x_{k+1}) d(fx_{k}, fx_{\ast})\\
&\le b\, d(x_{\ast}, x_{k+1})+b\, \psi(d(x_{k}, x_{\ast}))+\rho\,d(x_{\ast}, x_{k+1}) \psi(d(x_{k}, x_{\ast})),
\end{align*}
thus as $k$ tends to infinity, we conclude that $x_{\ast}= fx_{\ast}$. 
Finally, assume that $x_{\ast}$ and $y_{\ast}$ are two distinct fixed points of $f$, then by \eqref{ctr} we deduce
\begin{equation*}
d(x_{\ast},y_{\ast})=d(fx_{\ast},fy_{\ast})\le \psi(d(x_{\ast},y_{\ast}))<d(x_{\ast},y_{\ast}),
\end{equation*}
which is a contradiction, so $x_{\ast}=y_{\ast}$ and the fixed point is unique.
\end{proof}
Next,  if $\psi\in\mathbb{M}_{b}$, we can use a proof similar to that of \cite[Theorem 2.1]{Berzig2022}.
\begin{theorem}\label{TH2}
Let $(X,d)$ be a complete \(b\)-suprametric space and $f\colon X\to X$ be a mapping. Assume   there exists $\psi\in\mathbb{M}_{b}$ such that \eqref{ctr} holds.
Then, $f$ has a unique fixed point  and $\{f^{n}x_{0}\}_{n\in\mathbb{N}}$ converges to it for every $x_{0}\in X$.
\end{theorem}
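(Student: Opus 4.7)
The strategy is to adapt the proof of Theorem \ref{Berzig} (the suprametric case, $b=1$) by exploiting the strengthened ratio condition \eqref{psi3} to absorb the factor $b \ge 1$ that now appears in the $b$-suprametric inequality of Definition \ref{d3}. This bypasses the invariant-ball construction used in the proof of Theorem \ref{TH}.

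Fix $x_0 \in X$ arbitrarily and set $x_n = f^n x_0$, $d_n = d(x_n, x_{n+1})$. Iterating \eqref{ctr} gives $d_n \le \psi^n(d_0)$. If $d_0 = 0$ we are done, so assume $d_0 > 0$. By \eqref{psi3} applied at $t = d_0$, we choose $\lambda \in (0, 1/b)$ and $N \in \mathbb{N}$ with $\psi^{n+1}(d_0) \le \lambda\, \psi^n(d_0)$ for all $n \ge N$. Consequently $\psi^n(d_0)$, and therefore $d_n$, decays geometrically with ratio $\le \lambda$, so that both $\sum_n d_n$ and $\sum_n b^n d_n$ are finite (the latter uses $b\lambda < 1$).

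To show $\{x_n\}$ is Cauchy, I apply Definition \ref{d3} with $(x, z, y) = (x_n, x_{n+1}, x_m)$ to obtain
\begin{equation*}
d(x_n, x_m) \le b\, d_n + (b + \rho\, d_n)\, d(x_{n+1}, x_m),
\end{equation*}
and a straightforward induction on $m - n$ then yields, for $m > n$,
\begin{equation*}
d(x_n, x_m) \le \sum_{k=n}^{m-1} b^{k-n+1}\, d_k \prod_{j=n}^{k-1}\Bigl(1 + \tfrac{\rho}{b}\, d_j\Bigr).
\end{equation*}
Since $\sum_j d_j < \infty$, the infinite product $\prod_{j\ge 0}(1 + \rho d_j / b)$ converges to some finite constant $P$, so the right-hand side is bounded by $P\, b^{1-n} \sum_{k\ge n} b^k d_k$, which vanishes as $n \to \infty$ by the summability of $\sum_k b^k d_k$. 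Hence $\{x_n\}$ is Cauchy and converges, by completeness, to some $x_* \in X$.

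The remaining steps — that $fx_* = x_*$ and uniqueness of the fixed point — proceed exactly as in the closing lines of the proof of Theorem \ref{TH}: one passes to the limit $k \to \infty$ in the $b$-suprametric estimate for $d(x_*, fx_*)$ using \eqref{ctr} and Remark \ref{rem}, while uniqueness follows from $\psi(t) < t$ for $t > 0$. The crux of the argument, and the very reason for introducing the bound \eqref{psi3}, is to render the weighted series $\sum_k b^k d_k$ convergent, precisely compensating for the $b$-factor accumulated through the iterated triangle inequality. The main obstacle to anticipate is verifying that the iterated inequality and the attendant infinite product genuinely control $d(x_n, x_m)$ uniformly in $m$; once the ratio estimate is in hand, this reduces to a routine geometric-series argument.
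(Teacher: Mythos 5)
Your proposal is correct and follows essentially the same route as the paper: iterating the $b$-suprametric inequality along the orbit yields exactly the bound $d(x_n,x_m)\le\sum_{k=n}^{m-1} b\,d_k\prod_{j=n}^{k-1}(b+\rho\,d_j)$ (your factored form $b^{k-n+1}d_k\prod_j(1+\rho d_j/b)$ is the same expression), and condition \eqref{psi3} is then used to force the resulting series to converge. The only cosmetic difference is that you split off a convergent infinite product and a geometric tail, whereas the paper applies the ratio test directly to the combined term $u_i=b\psi^{i}(d_0)\prod_{j=0}^{i-1}(b+\rho\psi^{j}(d_0))$; both uses of \eqref{psi3} are equivalent.
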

\begin{proof}
We adopt   the same notations  as the previous proof.
Assume that $d_{i}\ne 0$ for all $i$ (otherwise $x_{i}$ becomes a fixed point of $f$). So, from \eqref{ctr}, we deduce that $d_{i+1}\le \psi(d_{i})<d_{i}$  for all $i$. 
Next, we  show that $\{x_{i}\}$ is a Cauchy sequence. By Definition \ref{b-supra}, it follow that
\begin{align*}
d_{p,q}&\le bd_{p}+bd_{p+1,q}+\rho\,d_{p}\,d_{p+1,q}\\
&\le  b\psi^{p}(d_{0}) +(b+\rho\, \psi^{p}(d_{0}))  d_{p+1,q},
\end{align*}
where
\begin{align*}
d_{p+1,q} &\le bd_{p+1}+bd_{p+2,q}+\rho\,d_{p+1}\,d_{p+2,q}\\
&\le   b\psi^{p+1}(d_{0}) +(b+\rho\,  \psi^{p+1}(d_{0})) d_{p+2,q},
\end{align*}
Then combining the previous  inequalities, we obtain
\begin{align*}
d_{p,q}\le& 
b\psi^{p}(d_{0})  +b\psi^{p+1}(d_{0})(b+\rho\,  \psi^{p}(d_{0}))\\
&+(b+\rho\,  \psi^{p}(d_{0}))(b+\rho\,  \psi^{p+1}(d_{0}))d_{p+2,q}.
\end{align*}
Continuing this process, we obtain 
\begin{equation*}
d_{p,q}\le  \sum_{i=p}^{q-1} b\psi^{i}(d_{0})  \prod_{j=p}^{i-1} (b+\rho\psi^{j}(d_{0}) ).
\end{equation*}
We also have
\begin{equation*}
d_{p,q}\le  \sum_{i=p}^{q-1} b\psi^{i}(d_{0})  \prod_{j=0}^{i-1} (b+\rho\psi^{j}(d_{0}) ).
\end{equation*}
In order to study the behavior of $d_{p,q}$, we need to study the convergence of the series $\sum\limits_{i=0}^{\infty} u_{i}$, where 
\begin{equation*}
u_{i}=b\psi^{i}(d_{0})  \prod_{j=0}^{i-1} (b+\rho\psi^{j}(d_{0}) ).
\end{equation*}
To this end, observe that from  $\psi^{i+1}(d_{0}) \to 0$ as $i\to\infty$, we obtain 
\begin{equation*}
\lim_{i\to\infty}\frac{u_{i+1}}{u_{i}}\le \limsup_{i\to\infty}\frac{\psi^{i+1}(d_{0})}{\psi^{i}(d_{0})}(b+\rho\psi^{i}(d_{0}) )<1,
\end{equation*}
which implies that the series $\sum_{i=0}^{\infty} u_{i}$ converges, so $d_{p,q}$ tends to zero as $p,q$ tend to infinity.   Hence,   the sequence $\{x_{n}\}$ is  Cauchy, and  by completeness of $(X,d)$ we conclude that  $\{x_{n}\}$ converges to some $x\in X$.
We next show that $x$ is a fixed point of $f$. 
By using Definition \ref{b-supra} and \eqref{ctr}, we get 
\begin{align*}
d(x,fx)&\le b d(x,x_{n+1}) +b d(fx_{n},fx)+\rho  d(x,x_{n})d(fx_{n},fx)\\
&\le b d(x,x_{n+1}) +b \psi(d(x_{n},x)) + \rho d(x,x_{n}) \psi(d(x_{n},x))\\
&\le b d(x,x_{n+1}) +b d(x_{n},x) + \rho d(x,x_{n})^{2}.
\end{align*}
Thus, as $n$ tends to infinity, we deduce that $x=fx$. Finally, the uniqueness of the fixed point follows immediately from \eqref{ctr} and  Remark \ref{rem}.
\end{proof}
We immediately derive the following corollaries.
\begin{corollary}\label{COR1}
Let $(X,d)$ be a complete \(b\)-suprametric space and $f\colon X\to X$ be a mapping. Assume   there exists $c\in[0,1)$ such that 
\begin{equation*} 
d(fx,fy)\le c\,d(x,y)\;\text{ for all }x,y\in X.
\end{equation*}
Then, $f$ has a unique fixed point  and $\{f^{n}x_{0}\}_{n\in\mathbb{N}}$ converges to it for every $x_{0}\in X$.
\end{corollary}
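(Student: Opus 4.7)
The plan is to specialize Theorem \ref{TH} to the linear Matkowski function $\psi(t)=ct$ with $c\in[0,1)$. All the real work is already done in Theorem \ref{TH}; what remains is to check that this particular $\psi$ lies in the class $\mathbb{M}$ so that the theorem can be invoked verbatim.

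First I would define $\psi\colon\mathbb{R}_{+}\to\mathbb{R}_{+}$ by $\psi(t)=ct$. Monotonicity (condition (i) in the definition of $\mathbb{M}$) is immediate because $c\ge 0$. For condition (ii), an easy induction gives $\psi^{n}(t)=c^{n}t$, and since $0\le c<1$ we conclude $\psi^{n}(t)\to 0$ for every $t>0$. Hence $\psi\in\mathbb{M}$.

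Next I would observe that the hypothesis $d(fx,fy)\le c\,d(x,y)$ is exactly $d(fx,fy)\le\psi(d(x,y))$, which is the contraction inequality \eqref{ctr} required by Theorem \ref{TH}. Since $(X,d)$ is a complete $b$-suprametric space, Theorem \ref{TH} applies and yields both the existence and uniqueness of a fixed point of $f$ as well as the convergence of the Picard sequence $\{f^{n}x_{0}\}_{n\in\mathbb{N}}$ to that fixed point for every starting point $x_{0}\in X$.

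There is no substantive obstacle; the only subtle point is to invoke Theorem \ref{TH} rather than Theorem \ref{TH2}. Using the latter would require $\psi\in\mathbb{M}_{b}$, that is, $\limsup_{n\to\infty}\psi^{n+1}(t)/\psi^{n}(t)=c<1/b$, which is strictly stronger than the assumption $c<1$ and would not recover the corollary in the full range stated. Theorem \ref{TH}, by contrast, imposes no such constraint coupling $c$ with $b$, so it is the correct result to apply.
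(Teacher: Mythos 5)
Your proof is correct and follows exactly the route the paper intends: the corollary is stated as an immediate consequence of Theorem \ref{TH}, obtained by taking $\psi(t)=ct$ and verifying $\psi\in\mathbb{M}$, precisely as you do. Your remark that Theorem \ref{TH2} would only cover $c<1/b$ and hence does not suffice is a correct and worthwhile observation.
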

\begin{corollary}\label{COR2}
Let $(X,d)$ be a complete suprametric space and $f\colon X\to X$ be a mapping. Assume there exists $\psi\in\mathbb{M}$ such that 
\begin{equation*} 
d(fx,fy)\le \psi(d(x,y))\;\text{ for all }x,y\in X.
\end{equation*}
Then, $f$ has a unique fixed point  and $\{f^{n}x_{0}\}_{n\in\mathbb{N}}$ converges to it for every $x_{0}\in X$.
\end{corollary}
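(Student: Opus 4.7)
The plan is to deduce Corollary \ref{COR2} as an immediate specialization of Theorem \ref{TH}. The key observation is structural: comparing Definition \ref{b-suprametric} with Definition \ref{b-supra}, the suprametric inequality
\[
d(x,y)\le d(x,z)+d(z,y)+\rho\,d(x,z)d(z,y)
\]
is nothing more than the $b$-suprametric inequality evaluated at $b=1$. Since the two notions of convergence, Cauchy sequence, and completeness introduced for $b$-suprametric spaces reduce, when $b=1$, to the corresponding notions one would use in a suprametric space, every complete suprametric space is automatically a complete $b$-suprametric space with $b=1$.

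With this embedding in hand, I would verify that all hypotheses of Theorem \ref{TH} are satisfied: the underlying space $(X,d)$ is a complete $b$-suprametric space (with $b=1$ and the same $\rho\ge 0$ as in the suprametric structure), the map $f\colon X\to X$ is a $\psi$-contraction in the sense of \eqref{ctr}, and the Matkowski function $\psi$ belongs to $\mathbb{M}$. Theorem \ref{TH} then delivers the two conclusions of the corollary simultaneously: $f$ has a unique fixed point $x_{\ast}\in X$, and for every $x_{0}\in X$ the Picard sequence $\{f^{n}x_{0}\}_{n\in\mathbb{N}}$ converges to $x_{\ast}$.

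There is essentially no obstacle to this proof because the corollary is a genuine specialization rather than a new result: no new estimates, no adjustment of the constants $q,p,m_{0}$ appearing in the proof of Theorem \ref{TH}, and no separate treatment of uniqueness are needed. The only point worth stating explicitly is the reduction $b=1$ that identifies suprametric spaces as a subclass of $b$-suprametric spaces; once this is recorded, the corollary follows in a single line by invoking Theorem \ref{TH}.
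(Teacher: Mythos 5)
Your proof is correct and matches the paper's own derivation: the paper states the corollary as an immediate consequence of Theorem \ref{TH}, precisely because a suprametric space is a $b$-suprametric space with $b=1$. Nothing further is needed.
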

As immediate consequences, we   obtain the following  propositions.
\begin{proposition}
Theorem \ref{TH} generalizes Theorem \ref{Czerwik}.  
\end{proposition}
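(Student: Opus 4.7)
The plan is to show that the hypotheses of Theorem \ref{Czerwik} imply the hypotheses of Theorem \ref{TH}, so that the conclusion of Theorem \ref{TH} — which is identical to the conclusion of Theorem \ref{Czerwik} — immediately yields the latter. The heart of the argument is a trivial observation at the level of the defining axioms.

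First, I would check that every $b$-metric is a $b$-suprametric with $\rho=0$. Let $(X,d)$ be a $b$-metric space with constant $b\ge 1$. Since $\rho\, d(x,z)d(z,y)\ge 0$ for any choice of $\rho\ge 0$, the $b$-metric axiom (d\textsubscript{3}) of Definition \ref{bmetric} gives
\begin{equation*}
d(x,y)\le b\bigl(d(x,z)+d(z,y)\bigr)\le b\bigl(d(x,z)+d(z,y)\bigr)+\rho\, d(x,z)d(z,y)
\end{equation*}
for all $x,y,z\in X$. Thus the relaxed axiom (d\textsubscript{3}) of Definition \ref{b-supra} holds with the same $b$ and with $\rho=0$. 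The semimetric properties (d\textsubscript{1}) and (d\textsubscript{2}) are shared by both structures, so $(X,d)$ is a $b$-suprametric space.

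Second, I would observe that the notions of convergence, Cauchy sequence, and completeness used in Theorem \ref{Czerwik} and in Theorem \ref{TH} depend only on the function $d$ and not on which triangle-type inequality it satisfies; they coincide with the usual metric-style definitions reproduced after Definition \ref{b-supra}. Hence $(X,d)$ complete as a $b$-metric space means exactly $(X,d)$ complete as a $b$-suprametric space. Moreover, the contraction hypothesis $d(fx,fy)\le \psi(d(x,y))$ with $\psi\in\mathbb{M}$ is literally the same as \eqref{ctr}. Applying Theorem \ref{TH} to $(X,d,f,\psi)$ then gives the existence of a unique fixed point together with convergence of the Picard iterates $\{f^{n}x_{0}\}_{n\in\mathbb{N}}$ from every starting point, which is precisely the conclusion of Theorem \ref{Czerwik}.

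There is essentially no obstacle here: the proposition is a formal embedding statement. If one wanted to record that the generalization is \emph{strict}, one could invoke Examples \ref{example6}--\ref{example9} or \ref{ex6}, each of which exhibits a $b$-suprametric for which the product term $\rho\,d(x,z)d(z,y)$ is genuinely needed, and hence is not a $b$-metric for the stated constant $b$; but this is not required for the statement as written.
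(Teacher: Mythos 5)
Your argument is correct and is exactly the one the paper intends: the paper presents this proposition as an immediate consequence, and the observation that a $b$-metric is a $b$-suprametric with $\rho=0$ (together with the coincidence of the completeness and contraction hypotheses) is the whole content. The aside about strictness is a harmless overstatement you rightly flag as unnecessary, since the examples only rule out the stated constants rather than every possible $b$.
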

\begin{proposition}
Corollary \ref{COR2} generalizes Theorem \ref{Berzig}.  
\end{proposition}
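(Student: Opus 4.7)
The plan is to reduce Theorem \ref{Berzig} to a direct application of Corollary \ref{COR2} by exhibiting the Banach-type contraction constant as a particular Matkowski function. Concretely, suppose $(X,d)$ is a complete suprametric space and $f\colon X\to X$ satisfies $d(fx,fy)\le c\,d(x,y)$ for some fixed $c\in[0,1)$. I would set $\psi(t)=ct$ and verify that $\psi\in\mathbb{M}$: monotonicity on $\mathbb{R}_{+}$ is immediate because $c\ge 0$, and $\psi^{n}(t)=c^{n}t\to 0$ as $n\to\infty$ for every $t>0$ since $0\le c<1$. The contraction estimate is then literally $d(fx,fy)\le\psi(d(x,y))$, so Corollary \ref{COR2} applies and produces a unique fixed point together with convergence of $\{f^{n}x_{0}\}_{n\in\mathbb{N}}$ for every $x_{0}\in X$, which is precisely the conclusion of Theorem \ref{Berzig}. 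This is the whole content of the word ``generalizes'' in the usual sense that every instance of Berzig's theorem is recovered.

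To make the generalization non-trivial, I would additionally exhibit a function in $\mathbb{M}$ that is not of the form $t\mapsto ct$, so that Corollary \ref{COR2} genuinely covers mappings beyond the reach of Theorem \ref{Berzig}. A standard witness is $\psi(t)=\frac{t}{1+t}$: it is strictly increasing on $\mathbb{R}_{+}$, and an easy induction gives $\psi^{n}(t)=\frac{t}{1+nt}\to 0$, while no constant $c<1$ satisfies $\psi(t)\le c\,t$ uniformly near $t=0$. Hence a mapping with $d(fx,fy)\le\psi(d(x,y))$ falls under Corollary \ref{COR2} but not under Theorem \ref{Berzig}. The argument poses no real obstacle; the only step that deserves a careful line is the verification of the limit condition $\psi^{n}(t)\to 0$, and for the linear choice this is one line.
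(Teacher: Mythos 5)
Your proposal is correct and matches what the paper intends: the proposition is stated as an ``immediate consequence,'' and the implicit argument is exactly your reduction, namely that $\psi(t)=ct$ with $c\in[0,1)$ belongs to $\mathbb{M}$ (it is increasing and $\psi^{n}(t)=c^{n}t\to 0$), so every instance of Theorem \ref{Berzig} is an instance of Corollary \ref{COR2}. Your extra witness $\psi(t)=t/(1+t)$ showing the generalization is proper is a correct bonus beyond what the paper records.
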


\noindent\textbf{Acknowledgment:} 
The author thanks the referees for valuable suggestions.

\section*{Compliance with Ethical Standards}
\noindent\textbf{Funding:} Not Applicable.\\
\textbf{Conflict of Interest:}  The author  declares that he  has no conflict of interest. \\
\textbf{Ethical approval:} This article does not contain any studies with human participants or animals performed by  the author.\\
\textbf{Data availability:} Not Applicable.

\addcontentsline{toc}{section}{\refname}
\medskip

\end{document}